\newtheorem{thm}{Theorem}
\newtheorem{lemma}[thm]{Lemma}
\newtheorem{cor}[thm]{Corollary}
\newtheorem{prop}[thm]{Proposition}
\newcommand{\C}{{\mathbb C}}
\newcommand{\D}{{\mathbb D}}
\newcommand{\ind}{\int_\D}
\newcommand{\T}{{\mathbb T}}
\newcommand{\cn}{\C^n}
\newcommand{\inc}{\int_{\cn}}
\newcommand{\R}{{\mathbb R}}
\renewcommand{\H}{{\mathbb H}}
\newcommand{\im}{{\rm Im}\,}
\begin{document}

\title[Fock spaces]
{Translation Invariance of Fock Spaces}

\author{Kehe Zhu}
\address{Department of Mathematics and Statistics, State University
of New York, Albany, NY 12222, USA}
\email{kzhu@math.albany.edu}

\subjclass[2000]{30H20}
\keywords{Fock spaces, Gaussian measures, translation invariance, atomic decomposition,
reproducing kernel, Heisenberg group.}

\begin{abstract}
We show that there is only one Hilbert space of entire functions that is
invariant under the action of naturally defined weighted translations.
\end{abstract}

\maketitle

\section{Introduction}

For $\alpha>0$ and $0<p\le\infty$ the Fock space $F^p_\alpha$ consists of entire 
functions $f$ in $\cn$ such that the function $f(z)e^{-\alpha|z|^2/2}$ belongs to 
$L^p(\cn,dv)$, where $dv$ is the Lebesgue volume measure on $\cn$ . The spaces 
$F^p_\alpha$ are sometimes called Bargmann or Segal-Bargmann spaces as well.

When $0<p<\infty$ and $f\in F^p_\alpha$, we write
$$\|f\|^p_{p,\alpha}=\left(\frac\alpha\pi\right)^n\inc\left|f(z)e^{-\frac\alpha2|z|^2}
\right|^p\,dv(z).$$
For $f\in F^\infty_\alpha$ we write
$$\|f\|_{\infty,\alpha}=\sup_{z\in\cn}|f(z)|e^{-\frac\alpha2|z|^2}.$$

If we define
$$d\lambda_\alpha(z)=\left(\frac\alpha\pi\right)^ne^{-\alpha|z|^2}\,dv(z),$$
then $F^2_\alpha$ is a closed subspace of $L^2(\cn,d\lambda_\alpha)$, and hence is a Hilbert
space itself with the following inherited inner product from $L^2(\cn,d\lambda_\alpha)$:
$$\langle f,g\rangle_\alpha=\inc f(z)\overline{g(z)}\,d\lambda_\alpha(z).$$

For any point $a\in\cn$ we define a linear operator $T_a$ as follows.
$$T_af(z)=e^{\alpha z\overline a-\frac\alpha2|a|^2}f(z-a),$$
where
$$z\overline a=z_1\overline a_1+\cdots+z_n\overline a_n$$
for $z=(z_1,\cdots,z_n)$ and $a=(a_1,\cdots,a_n)$ in $\cn$.
These operators will be called (weighted) translation operators. A direct calculation
shows that they are surjective isometries on all the Fock spaces $F^p_\alpha$,
$0<p\le\infty$. In particular, each $T_a$ is a unitary operator on the Fock space
$F^2_\alpha$.

The purpose of this note is to show that $F^2_\alpha$ is the only Hilbert space of entire
functions in $\cn$ that is invariant under the action of the weighted translations $T_a$:
\begin{equation}
\langle T_af,T_ag\rangle_\alpha=\langle f,g\rangle_\alpha
\label{eq1}
\end{equation}
for all $f$ and $g$ in $F^2_\alpha$. In other words, if $H$ is any other Hilbert space of 
entire functions in $\cn$ such that (\ref{eq1}) holds for all $f$ and $g$ in $H$, then
$H=F^2_\alpha$ and there exists a positive constant $c$ with $\langle f,g\rangle_H=
c\langle f,g\rangle_\alpha$ for all $f$ and $g$ in $H$.

Along the way, we will also demonstrate that, in some sense, $F^1_\alpha$ is minimal 
among translation invariant Banach spaces of entire functions in $\cn$, and 
$F^\infty_\alpha$ is maximal among translation invariant Banach spaces of entire 
functions in $\cn$.

The uniqueness of $F^2_\alpha$ here is not a surprise. The corresponding result for Bergman
and Besov spaces has been known for a long time. See \cite{AF2,AF3,Z}. In the setting of Bergman
and Besov spaces, the rotation group plays a key role. In our setting here, the translations
do not include any rotations, so a new approach is needed. We present two new approaches 
here: one that is specific for the Fock space setting and another that can be used in 
the Bergman space setting as well.

\section{Preliminaries}

Since $F^2_\alpha$ is a closed subspace of the Hilbert space $L^2(\cn,d\lambda_\alpha)$,
there exists an orthogonal projection
$$P_\alpha:L^2(\cn,d\lambda_\alpha)\to F^2_\alpha.$$
It turns out that $P_\alpha$ is an integral operator,
$$P_\alpha f(z)=\inc f(w)e^{\alpha z\overline w}\,d\lambda_\alpha(w),\qquad 
f\in L^2(\cn,d\lambda_\alpha),$$
where $K_w(z)=K(z,w)=e^{\alpha z\overline w}$ is the reproducing kernel of $F^2_\alpha$.
In terms of this reproducing kernel we can rewrite
$$T_af(z)=f(z-a)k_a(z),$$
where 
$$k_a(z)=K_a(z)/\|K_a\|_{2,\alpha}=e^{\alpha z\overline a-\frac\alpha2|a|^2}$$
is a unit vector in $F^2_\alpha$ and is called the normalized reproducing kernel of 
$F^2_\alpha$ at the point $a\in\cn$.

\begin{lemma}
For any two points $a$ and $b$ in $\cn$ we have
\begin{equation}
T_aT_b=e^{-\alpha i\im(a\overline b)}T_{a+b}=e^{\alpha i\im(\overline ab)}T_{a+b}.
\label{eq2}
\end{equation}
\label{1}
\end{lemma}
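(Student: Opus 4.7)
The plan is a direct unpacking of the definitions. Applying $T_b$ to $f$ gives $T_bf(z)=e^{\alpha z\overline b-\frac{\alpha}{2}|b|^2}f(z-b)$, and then applying $T_a$ yields
$$T_aT_bf(z)=e^{\alpha z\overline a-\frac{\alpha}{2}|a|^2}\,e^{\alpha(z-a)\overline b-\frac{\alpha}{2}|b|^2}f(z-a-b).$$
On the other hand, $T_{a+b}f(z)=e^{\alpha z(\overline a+\overline b)-\frac{\alpha}{2}|a+b|^2}f(z-a-b)$, and expanding $|a+b|^2=|a|^2+|b|^2+a\overline b+\overline a b$ lets me line the two expressions up.

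Next, I would take the ratio $T_aT_bf(z)/T_{a+b}f(z)$. After the dust settles the dependence on $z$ and on $f$ drops out, and the surviving exponent is $\frac{\alpha}{2}(\overline a b-a\overline b)$. Since $a\overline b$ and $\overline a b$ are conjugates, $\overline a b-a\overline b=-2i\im(a\overline b)=2i\im(\overline a b)$, which gives the two equivalent scalars $e^{-\alpha i\im(a\overline b)}=e^{\alpha i\im(\overline a b)}$ in (\ref{eq2}).

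There is no real obstacle; the only thing to watch is bookkeeping of the $-\frac{\alpha}{2}|a+b|^2$ cross terms and the sign conventions for $\im$. I would present the proof as a one-line chain of equalities for $T_aT_bf(z)$, followed by the identification of the unimodular constant via the $\im$ formula above.
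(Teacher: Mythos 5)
Your computation is correct and is precisely the ``direct calculation'' that the paper leaves to the reader: expanding $T_aT_bf$ and $T_{a+b}f$, cancelling, and identifying the leftover exponent $\frac{\alpha}{2}(\overline a b-a\overline b)=-\alpha i\im(a\overline b)=\alpha i\im(\overline a b)$. The bookkeeping of the cross terms in $|a+b|^2$ and the sign of $\im$ all check out, so nothing further is needed.
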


\begin{proof}
This follows from a direct calculation and we leave the routine details to the
interested reader.
\end{proof}

\begin{cor}
Each $T_a$ is invertible on $F^p_\alpha$ with $T_a^{-1}=T_{-a}$. In particular, each
$T_a$ is a unitary operator on $F^2_\alpha$ with $T_a^{-1}=T_a^*=T_{-a}$.
\label{2}
\end{cor}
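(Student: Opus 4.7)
The plan is to deduce both claims directly from Lemma~\ref{1} by specializing $b$ to $-a$, with essentially no extra work.

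First I would note that $T_0 = I$: when $a = 0$, the formula $T_a f(z) = e^{\alpha z\overline a - \frac\alpha 2 |a|^2} f(z-a)$ collapses to $T_0 f(z) = f(z)$. Next I would apply Lemma~\ref{1} with $b = -a$. The phase factor becomes $e^{-\alpha i \im(a\overline{(-a)})} = e^{-\alpha i \im(-|a|^2)}$, and since $|a|^2$ is real its imaginary part is zero, so the phase is $1$. Thus Lemma~\ref{1} gives $T_a T_{-a} = T_0 = I$, and swapping the roles of $a$ and $-a$ gives $T_{-a} T_a = I$ as well. This proves that $T_a$ is a two-sided inverse to $T_{-a}$ on each $F^p_\alpha$, which is the first assertion.

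For the unitary statement, I would recall the remark made in the introduction that each $T_a$ is a surjective isometry on $F^2_\alpha$ (a fact verified by direct computation with $d\lambda_\alpha$). A surjective linear isometry between Hilbert spaces is unitary, so $T_a^* = T_a^{-1}$. Combining this with the identity $T_a^{-1} = T_{-a}$ just established yields $T_a^* = T_{-a}$.

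There is really no hard step here: the whole corollary is a one-line consequence of Lemma~\ref{1}. The only thing to be careful about is the cancellation of the cocycle phase, which works precisely because $\im(a\overline{(-a)}) = -\im|a|^2 = 0$; this is the reason $T_a$ and $T_{-a}$ compose to the identity without any scalar correction.
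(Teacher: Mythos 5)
Your proof is correct and is precisely the argument the paper intends: the corollary is stated without proof as an immediate consequence of Lemma~\ref{1} (take $b=-a$, note the cocycle phase vanishes since $a\overline{(-a)}=-|a|^2$ is real), combined with the surjective-isometry remark from the introduction for the unitary claim. Nothing is missing.
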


Recall that the Heisenberg group is $\H=\cn\times\R$ with the group operation defined by
$$(z,t)\oplus(w,s)=(z+w,t+s-\im(z\overline w)).$$
It follows that the weighted translations $T_a$, $a\in\cn$, can be thought of as elements
of the Heisenberg group. They do not form a subgroup though. Furthermore, the mapping 
$(z,t)\mapsto e^{\alpha it}T_z$ is a unitary representation of the Heisenberg group 
on $F^2_\alpha$. Although it is nice to know this connection, we will not need the full 
action of the Heisenberg group. We only need the action of these weighted translations.

The following result, often referred to as the atomic decomposition for Fock spaces,
will play a key role in our analysis.

\begin{thm}
Let $0<p\le\infty$. There exists a sequence $\{z_j\}$ in $\cn$ with the following property: 
an entire function $f$ in $\cn$ belongs to the Fock space $F^p_\alpha$ if and only if it 
can be represented as
\begin{equation}
f(z)=\sum_{j=1}^\infty c_jk_{z_j}(z),
\label{eq3}
\end{equation}
where $\{c_j\}\in l^p$ and
\begin{equation}
\|f\|_{p,\alpha}\sim\inf\|\{c_j\}\|_{l^p}.
\label{eq4}
\end{equation}
Here the infimum is taken over all sequences $\{c_j\}$ satisfying (\ref{eq3}).
\label{3}
\end{thm}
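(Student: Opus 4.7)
The plan is to use the standard sampling-and-perturbation strategy (in the spirit of Coifman-Rochberg): the sequence $\{z_j\}$ will be chosen as a sufficiently dense separated set in $\cn$, the reproducing formula will be discretized against this set to produce an operator $A_r$ on $F^p_\alpha$ close to the identity in operator norm, and the atomic decomposition will then be read off by inverting $A_r$.

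For the easy half, I would fix $r>0$ and choose $\{z_j\}$ to be an $r$-lattice: a sequence with the balls $B(z_j,r/2)$ pairwise disjoint and $\bigcup_jB(z_j,r)=\cn$. Using the identity $|k_a(z)|e^{-\alpha|z|^2/2}=e^{-\alpha|z-a|^2/2}$, immediate from $|z-a|^2=|z|^2+|a|^2-2\re(z\overline a)$, the $p$-norm of $k_a$ is uniformly bounded in $a$ for every $0<p\le\infty$. Combined with the separation of the $z_j$, an explicit Gaussian estimate shows that $\sum_jc_jk_{z_j}$ converges in $F^p_\alpha$ whenever $\{c_j\}\in l^p$, with $\|f\|_{p,\alpha}\le C\|\{c_j\}\|_{l^p}$; the cases $0<p\le 1$ and $p=\infty$ follow from subadditivity (respectively the triangle inequality), while $1<p<\infty$ uses a Young-type convolution estimate against the Gaussian $e^{-\alpha|z|^2/2}$.

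For the hard half, I would rewrite the reproducing formula as
$$f(z)=\inc [f(w)e^{-\alpha|w|^2/2}]\,k_w(z)\,(\alpha/\pi)^n\,dv(w),$$
pick a Borel partition $\{D_j\}$ of $\cn$ with $B(z_j,r/2)\subset D_j\subset B(z_j,2r)$, and introduce the discretized operator
$$A_rf(z)=\sum_j(\alpha/\pi)^n|D_j|\,f(z_j)e^{-\alpha|z_j|^2/2}\,k_{z_j}(z)=:\sum_jc_j(f)\,k_{z_j}(z).$$
A standard pointwise estimate for $F^p_\alpha$-functions (subharmonicity of $|f|^p$ combined with the local Lipschitz nature of $|w|^2$) gives $\|\{c_j(f)\}\|_{l^p}\le C\|f\|_{p,\alpha}$, so $A_r$ is bounded. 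The remaining step is the perturbation estimate: writing $(I-A_r)f$ as a single integral of the variations of $w\mapsto f(w)e^{-\alpha|w|^2/2}k_w(z)$ across each $D_j$, one wants the norm of $I-A_r$ on $F^p_\alpha$ to tend to $0$ as $r\to 0$. Once $\|I-A_r\|<1$ for $r$ small, $A_r$ is invertible on $F^p_\alpha$ and the identity $f=A_r(A_r^{-1}f)=\sum_jc_j(A_r^{-1}f)k_{z_j}$ yields the atomic decomposition with coefficient norm bounded by a constant times $\|f\|_{p,\alpha}$.

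The main obstacle is precisely the perturbation estimate $\|I-A_r\|\to 0$. Pointwise smallness of the integrand is clear from continuity of $w\mapsto k_w(z)e^{-\alpha|w|^2/2}$, but translating it into an operator-norm bound uniform in $f$ requires a Schur test against the Gaussian kernel $e^{-\alpha|z-w|^2/2}$ (efficient for $1\le p\le\infty$) or a direct integral estimate combined with the quasi-triangle inequality (for $0<p<1$). Arranging matters so that the resulting bound genuinely vanishes with $r$, rather than merely being bounded, is the delicate technical point, and it is what forces the sampling lattice to be chosen sufficiently fine from the outset.
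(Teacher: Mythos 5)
The paper does not actually prove Theorem~\ref{3}; it simply cites \cite{JPR} and \cite{W}, so there is no internal argument to compare against. Your outline is precisely the Coifman--Rochberg discretization scheme that those references (and standard treatments of Fock spaces) use: an $r$-lattice, the elementary identity $|k_a(z)|e^{-\alpha|z|^2/2}=e^{-\alpha|z-a|^2/2}$ for the easy inclusion, and inversion of a discretized reproducing-formula operator $A_r$ for the hard one. In that sense you have reconstructed the right proof.

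However, as written your text is a plan rather than a proof, and the gap sits exactly where you say it does: the estimate $\|I-A_r\|_{F^p_\alpha\to F^p_\alpha}\to 0$ as $r\to 0$ is stated as a goal, not established. It is not a formality. The way it is actually carried out is to write
$$f(z)-A_rf(z)=\left(\frac\alpha\pi\right)^n\sum_j\int_{D_j}\left[f(w)e^{\alpha z\overline w-\alpha|w|^2}-f(z_j)e^{\alpha z\overline{z_j}-\alpha|z_j|^2}\right]dv(w),$$
bound the bracket on each cell by $r$ times a supremum of a gradient over $B(z_j,2r)$ using the mean-value inequality for the subharmonic function $|f|e^{-\alpha|\cdot|^2/2}$ (suitably twisted by $e^{\alpha z\overline w}$), and then sum in $j$ via a Schur test against $e^{-\delta|z-w|^2}$; the factor $r$ (or $r$ times a fixed constant) is what makes the operator norm small, and one must check that the Schur constant stays bounded as $r\to0$ rather than blowing up. You should also flag that the case $0<p<1$ is genuinely harder --- it is the entire content of Wallst\'en's paper \cite{W}, separate from \cite{JPR} which handles $1\le p\le\infty$ --- because the Schur test is unavailable and the quasi-triangle inequality $\|\sum g_j\|^p\le\sum\|g_j\|^p$ must be paired with pointwise estimates on each cell that still produce a decaying factor in $r$; your one-sentence remark does not yet show that this can be arranged. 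Until the smallness of $\|I-A_r\|$ is actually proved in both regimes, the hard half of the theorem (every $f\in F^p_\alpha$ admits a representation (\ref{eq3}) with $\|\{c_j\}\|_{l^p}\lesssim\|f\|_{p,\alpha}$) remains open in your write-up.
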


\begin{proof}
See \cite{JPR} and \cite{W}.
\end{proof}

\section{Two extreme cases}

In this section we show that the Fock spaces $F^1_\alpha$ and $F^\infty_\alpha$ are
extremal among translation invariant Banach spaces of entire functions in $\cn$. The
next two propositions were stated as Corollary~{8.1} in \cite{JPR}, derived from a
general framework of Banach spaces invariant under the action of the Heisenberg group.
We include these results here for completeness and note that only the weighted 
translations from the Heisenberg group are needed.

\begin{prop}
The Fock space $F^\infty_\alpha$ is maximal in the following sense. If $X$ is any Banach space 
of entire functions in $\cn$ satisfying
\begin{enumerate}
\item[(a)] $\|T_af\|_X=\|f\|_X$ for all $a\in\cn$ and $f\in X$.
\item[(b)] $f\mapsto f(0)$ is a bounded linear functional on $X$.
\end{enumerate}
Then $X\subset F^\infty_\alpha$ and the inclusion is continuous.
\label{4}
\end{prop}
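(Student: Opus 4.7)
The plan is to exploit the translation operators to reduce evaluation of $f$ at an arbitrary point $z$ to evaluation at the origin, where assumption (b) gives us a uniform bound.

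First I would compute $T_{-z}f(0)$ using the definition $T_a f(w)=e^{\alpha w\overline a-\frac\alpha2|a|^2}f(w-a)$. Setting $a=-z$ and $w=0$ yields
\[
T_{-z}f(0)=e^{-\frac\alpha2|z|^2}f(z).
\]
This identity is the heart of the argument: it expresses the weighted pointwise quantity $|f(z)|e^{-\alpha|z|^2/2}$, which defines the $F^\infty_\alpha$-norm, as $|E(T_{-z}f)|$, where $E$ denotes the evaluation functional $g\mapsto g(0)$.

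Next I would combine the two hypotheses. By (b), $E$ is a bounded linear functional on $X$, with some operator norm $M=\|E\|_{X^*}<\infty$. By (a), $T_{-z}$ is an isometry on $X$. Therefore
\[
|f(z)|e^{-\frac\alpha2|z|^2}=|E(T_{-z}f)|\le M\|T_{-z}f\|_X=M\|f\|_X
\]
for every $z\in\cn$ and every $f\in X$. Taking the supremum over $z$ gives $\|f\|_{\infty,\alpha}\le M\|f\|_X$, which establishes both the inclusion $X\subset F^\infty_\alpha$ and its continuity with inclusion constant $M$.

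There is no real obstacle here beyond spotting the identity $T_{-z}f(0)=e^{-\alpha|z|^2/2}f(z)$; the only things to verify are routine. One small point worth noting is that the argument nowhere uses that elements of $X$ are entire in any essential way beyond the fact that pointwise evaluation is meaningful, so the result is really a statement about Banach spaces of functions on $\cn$ that carry the weighted translation action and have a bounded point evaluation at $0$.
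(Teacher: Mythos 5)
Your proof is correct and is essentially identical to the paper's own argument: both reduce the weighted evaluation $e^{-\frac\alpha2|z|^2}|f(z)|$ to $|T_{-z}f(0)|$ and then apply the boundedness of evaluation at $0$ together with the isometry of $T_{-z}$. Nothing further is needed.
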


\begin{proof}
Condition (a) implies that $T_af\in X$ for every $f\in X$ and every $a\in\cn$. Combining this
with condition (b) we see that for every $a\in\cn$ the point evaluation $f\mapsto f(a)$ is 
also a bounded linear functional on $X$. Furthermore,
$$e^{-\frac\alpha2|a|^2}|f(a)|=|T_{-a}f(0)|\le C\|T_{-a}f\|_X=C\|f\|_X,$$
where $C$ is a positive constant that is independent of $a\in\cn$ and $f\in X$. Since $a$ is
arbitrary, we conclude that $f\in F^\infty_\alpha$ with $\|f\|_{\infty,\alpha}\le C\|f\|_X$
for all $f\in X$.
\end{proof}

\begin{prop}
The Fock space $F^1_\alpha$ is minimal in the following sense. If $X$ is a Banach space of 
entire functions in $\cn$ satisfying
\begin{enumerate}
\item[(a)] $\|T_af\|_X=\|f\|_X$ for all $a\in\cn$ and $f\in X$.
\item[(b)] $X$ contains all constant functions.
\end{enumerate}
Then $F^1_\alpha\subset X$ and the inclusion is continuous.
\label{5}
\end{prop}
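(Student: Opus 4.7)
The plan is to exploit the atomic decomposition of Theorem~\ref{3} together with the key observation that every normalized reproducing kernel is a weighted translate of the constant function $1$. Explicitly,
$$k_a(z)=e^{\alpha z\overline a-\frac\alpha2|a|^2}=T_a1(z),$$
so condition (b) gives $1\in X$ and condition (a) then yields $\|k_a\|_X=\|T_a1\|_X=\|1\|_X$ for every $a\in\cn$. This produces a uniform bound on the $X$-norms of the building blocks of the atomic decomposition.

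Given $f\in F^1_\alpha$, I would apply Theorem~\ref{3} with $p=1$ to choose a representation $f=\sum_j c_jk_{z_j}$ with $\sum_j|c_j|\le C\|f\|_{1,\alpha}$ for some constant $C$ independent of $f$. The partial sums $g_N=\sum_{j\le N}c_jk_{z_j}$ form a Cauchy sequence in $X$ since
$$\|g_N-g_M\|_X\le\sum_{j=M+1}^N|c_j|\,\|k_{z_j}\|_X=\|1\|_X\sum_{j=M+1}^N|c_j|,$$
so $g_N$ converges in $X$ to some $g\in X$ with $\|g\|_X\le\|1\|_X\sum_j|c_j|\le C\|1\|_X\|f\|_{1,\alpha}$.

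The remaining step is to identify $g$ with $f$, which is the only delicate point. Since $X$ is a Banach space of entire functions, the standard convention (and what is needed for $X$ to be a well-defined function space) is that norm convergence in $X$ implies pointwise convergence; point evaluation on $F^1_\alpha$ is also continuous. Both $g_N\to g$ in $X$ and $g_N\to f$ in $F^1_\alpha$ therefore force $g(z)=f(z)$ for every $z\in\cn$. This yields $f\in X$ with $\|f\|_X\le C\|1\|_X\|f\|_{1,\alpha}$, completing the proof of the continuous inclusion $F^1_\alpha\subset X$.

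The main obstacle is precisely this last identification of the two limits: everything else is a direct assembly of the atomic decomposition with the translation identity $k_a=T_a1$. If one wished to avoid any implicit continuity assumption on point evaluations in $X$, one could instead work with finite atomic combinations (which are dense in $F^1_\alpha$) and take closures, but the cleanest route is to invoke the standard convention that a Banach space of entire functions has continuous point evaluations.
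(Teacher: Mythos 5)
Your proof is correct and follows essentially the same route as the paper's: apply $T_a$ to the constant function $1$ to see that the atoms $k_{z_j}=T_{z_j}1$ lie in $X$ with uniformly bounded norm $\|1\|_X$, then sum the $\ell^1$ atomic decomposition of $f\in F^1_\alpha$ absolutely in $X$ and invoke the norm equivalence (\ref{eq4}). You are in fact slightly more careful than the paper at the one delicate point --- identifying the $X$-limit of the partial sums with $f$ itself, which requires that convergence in $X$ control pointwise values --- a step the paper passes over silently.
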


\begin{proof}
Since $X$ contains all constant functions, applying $T_a$ to the constant function $1$ shows
that for each $a\in\cn$ the function $k_a(z)=e^{\alpha z\overline a-\frac\alpha2|a|^2}$ 
belongs to $X$. Furthermore, $\|k_a\|_X=\|T_a1\|_X=\|1\|_X$ for all $a\in\cn$.

Let $\{z_j\}$ denote a sequence in $\cn$ on which we have atomic decomposition for 
$F^1_\alpha$. If $f\in F^1_\alpha$, there exists a sequence $\{c_j\}\in l^1$ such that
\begin{equation}
f=\sum_{j=1}^\infty c_jk_{z_j}.
\label{eq5}
\end{equation}
Since each $k_{z_j}$ belongs to $X$ and $\sum|c_j|<\infty$, we conclude that $f\in X$ with
$$\|f\|_X\le\sum_{j=1}^\infty|c_j|\|k_{z_j}\|_X=C\sum_{j=1}^\infty|c_j|,$$
where $C=\|1\|_X>0$. Taking the infimum over all sequences $\{c_j\}$ satisfying (\ref{eq5})
and applying (\ref{eq4}), we obtain another constant $C>0$ such that
$$\|f\|_X\le C\|f\|_{F^1_\alpha},\qquad f\in F^1_\alpha.$$
This proves the desired result.
\end{proof}

\section{Uniqueness of $F^2_\alpha$}

In this section we show that there is only one Hilbert space of entire functions in $\cn$
that is invariant under the action of the weighted translations.

There have been several similar results in the literature concerning the uniqueness of
certain Hilbert spaces of analytic functions. The first such result appeared in \cite{AF2},
where it was shown that the Dirichlet space is unique among M\"obius invariant (pre-)Hilbert
spaces of analytic functions in the unit disk. This result was generalized in
\cite{Z} to the case of the unit ball in $\cn$. Then a more systematic study was made in
\cite{AF3} concerning M\"obius invariant Hilbert spaces of analytic functions in bounded
symmetric domains. In each of these papers, a key idea was to average over a certain 
subgroup of the full group of automorphisms of the underlying domain. For example, in the
case of the unit disk, the average was taken over the rotation group.

In the Fock space case, the translation operators do not involve any rotation, and there does
not seem to be any natural average that one can use. We introduce two different approaches
here. One uses the group operation in the Heisenberg group in a critical way and so is 
specific to the Fock space setting. The other is based on reproducing kernel techniques
and so can be used in the Bergman space setting as well.

\begin{thm}
The Fock space $F^2_\alpha$ is unique in the following sense. If $H$ is any separable
Hilbert space of entire functions in $\cn$ satisfying
\begin{enumerate}
\item[(a)] $H$ contains all constant functions.
\item[(b)] $\|T_af\|_H=\|f\|_H$ for all $a\in\cn$ and $f\in H$.
\item[(c)] $f\mapsto f(0)$ is a bounded linear functional on $H$.
\end{enumerate}
Then $H=F^2_\alpha$ and there exists a positive constant $c$ such that
$\langle f,g\rangle_H=c\langle f,g\rangle_\alpha$ for all $f$ and $g$ in $H$.
\label{6}
\end{thm}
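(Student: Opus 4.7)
The plan is to combine reproducing-kernel analysis on $H$ with the Stone--von Neumann uniqueness theorem for the Heisenberg representation $(a,t)\mapsto e^{i\alpha t}T_a$, and then extract the conclusion by a short pointwise calculation. First I set up the reproducing-kernel picture. By (a), (b), Lemma~\ref{1}, and Corollary~\ref{2}, the operators $T_a$ form a group of unitaries on $H$, and $k_a = T_a 1$ lies in $H$ with $\|k_a\|_H = \|1\|_H =: \sqrt c$. Assumption (c) together with the identity $f(a) = e^{\alpha|a|^2/2}(T_{-a}f)(0)$ yields bounded point evaluation at every $a \in \cn$, so $H$ is a reproducing-kernel Hilbert space; write $R_a$ for its kernel at $a$. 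Using $T_{-a}^{\ast} = T_a$ and the reproducing property, one checks
$$R_a(z) = e^{\alpha|a|^2/2}(T_a R_0)(z) = e^{\alpha z\overline a}R_0(z-a),$$
so the entire kernel is encoded in the single element $R_0 \in H$. The theorem thus reduces to showing that $R_0$ is a positive constant: once that is known, $R_a(z) = c^{-1}K(z,a)$, and the standard RKHS uniqueness identifies $H$ with $F^2_\alpha$ and gives $\langle\cdot,\cdot\rangle_H = c\langle\cdot,\cdot\rangle_\alpha$.

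The route to ``$R_0$ is constant'' runs through irreducibility of the Heisenberg action on $H$, which I establish via the following sublemma: \emph{every nonzero closed $T_a$-invariant subspace $N \subset H$ contains a nonzero constant function}. Since $H$, hence $N$, is separable, Stone--von Neumann supplies an irreducible summand $N_1 \subset N$ and a unitary intertwiner $U: F^2_\alpha \to N_1$. Set $\eta := U(1) \in H$. Because $U$ is isometric and evaluation at $0$ is bounded on $H$, the functional $g \mapsto U(g)(0)$ is bounded on $F^2_\alpha$; by Riesz there is a unique $\xi \in F^2_\alpha$ with $U(g)(0) = \langle g,\xi\rangle_\alpha$ for every $g$. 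Computing $U(k_a)(0)$ in two ways --- via the intertwining $U(k_a) = T_a U(1) = T_a\eta$, giving $e^{-\alpha|a|^2/2}\eta(-a)$, and via Riesz, $\langle k_a,\xi\rangle_\alpha = e^{-\alpha|a|^2/2}\overline{\xi(a)}$ --- produces the pointwise identity
$$\eta(a) = \overline{\xi(-a)}.$$
The left-hand side is holomorphic in $a$ while the right-hand side is antiholomorphic, so $\eta$ must be a nonzero constant, which lies in $N_1 \subset N$.

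With the sublemma in hand, any nonzero closed $T_a$-invariant $M \subset H$ must contain $1$, hence the orbit $\{k_a\} = \{T_a 1\}$, hence $V := \overline{\mathrm{span}}\{k_a\}$; the sublemma applied to $V^\perp$ would produce a nonzero constant in $V^\perp$, but all constants already lie in $V$, forcing $V^\perp = 0$ and $M = H$. So the representation is irreducible, and by Schur the intertwiner $U: F^2_\alpha \to H$ from the sublemma (with $N = H$) is unique up to a scalar. The sublemma forces $U(1)$ to be constant; rescale $U$ so that $U(1) = (1/\sqrt c)\cdot 1$, which matches the isometry condition $\|U(1)\|_H = 1$. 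Then $U(k_a) = T_a U(1) = (1/\sqrt c)k_a$ as entire functions, and since $\{k_a\}$ spans $F^2_\alpha$ densely (Theorem~\ref{3} with $p=2$), continuity of $U$ identifies it with multiplication by $1/\sqrt c$. Therefore $H = F^2_\alpha$ as sets of entire functions and $\|f\|_H^2 = c\|f\|_\alpha^2$, which is the desired conclusion with $c = \|1\|_H^2$.

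The delicate steps I expect are: (i) the two-way computation of $U(k_a)(0)$ that leads to $\eta(a) = \overline{\xi(-a)}$, where one must transport the $H$-valued evaluation-at-zero through the intertwiner cleanly enough to extract an antiholomorphic object living in $F^2_\alpha$; and (ii) the invocation of Stone--von Neumann on a possibly proper invariant subspace, which is precisely why the statement includes the hypothesis that $H$ is separable.
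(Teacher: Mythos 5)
Your strategy is workable in outline but genuinely different from the paper's, which never touches representation theory. The paper's first argument shows directly that $F(a)=\langle 1,K_a\rangle_H$ is simultaneously holomorphic and antiholomorphic (hence constant) and then computes $\langle k_a,k_b\rangle_H$ via Lemma~\ref{1} and atomic decomposition; its second argument transports an orthonormal basis by $T_a$ to obtain $K_H(z,w)=k_a(z)\overline{k_a(w)}K_H(z-a,w-a)$ and concludes from the diagonal. Your own identity $R_a(z)=e^{\alpha z\overline a}R_0(z-a)$ is this second argument in disguise: setting $z=a$ gives $K_H(a,a)=e^{\alpha|a|^2}R_0(0)$, and since a kernel holomorphic in $z$ and antiholomorphic in $w$ is determined by its diagonal values, you are one line away from $K_H(z,w)=R_0(0)e^{\alpha z\overline w}$ --- with no need for irreducibility, Stone--von Neumann, or Schur. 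The holomorphic-versus-antiholomorphic trick in your sublemma ($\eta(a)=\overline{\xi(-a)}$) is exactly the device the paper uses to show $F(z)=\overline{F(-z)}$ forces $F$ constant, so the two proofs share their one genuinely analytic ingredient; everything else in your argument is extra machinery.

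The genuine gap is in the invocation of Stone--von Neumann. The multiplicity form of that theorem (every unitary representation of $\H$ with central character $e^{i\alpha t}$ decomposes as a direct sum of copies of the irreducible one) requires the representation to be strongly continuous, and you never verify that $a\mapsto T_af$ is continuous in the norm of $H$. This is not automatic from hypotheses (a)--(c): you have no dense subspace of $H$ on which continuity is already known (you cannot assert that polynomials or $F^1_\alpha$ are dense in $H$ without circularity), so the standard approximation argument has nothing to run on, and Stone--von Neumann is simply false without some continuity or measurability hypothesis. The gap is fixable --- once $H$ is known to be a reproducing-kernel space with locally uniformly bounded point evaluations, the family $T_af$ is norm-bounded and converges pointwise as $a\to a_0$, hence converges weakly, and a weakly continuous unitary representation is strongly continuous --- but this step must appear explicitly. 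A smaller complaint: you announce that the theorem ``reduces to showing $R_0$ is a positive constant'' and then never prove that statement, extracting the conclusion instead from the intertwiner; that is not an error, but it leaves a dangling reduction that the reader expects you to discharge.
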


\begin{proof}
For the duration of this proof we use $\langle f,g\rangle$ to denote the inner product in
$H$. The inner product in $F^2_\alpha$ will be denoted by $\langle f,g\rangle_\alpha$.

By Proposition~\ref{5}, $H$ contains all the functions 
$$k_a(z)=e^{\alpha z\overline a-\frac\alpha2|a|^2},\qquad z\in\cn.$$
Let $K_a(z)=e^{\alpha\overline az}$ denote the reproducing kernel of $F^2_\alpha$ 
and define a function $F$ on $\cn$ by
$$F(a)=\langle 1,K_a\rangle,\qquad a\in\cn.$$

We claim that $F$ is an entire function. To see this, assume $n=1$ (the higher
dimensional case is proved in the same way) and observe that for any fixed $a\in\C$
we have
\begin{equation}
K_a(z)=e^{\alpha z\overline a}=\sum_{k=0}^\infty\frac{\alpha^k\overline a^k}{k!}\,z^k.
\label{eq6}
\end{equation}
By Proposition~\ref{5}, we have $F^1_\alpha\subset H$ and the inclusion is continuous.
In particular, $H$ contains all polynomials and
\begin{equation}
\|z^k\|_H\le C\|z^k\|_{F^1_\alpha}\sim\left(\sqrt{\frac2\alpha}\right)^k\Gamma
\left(\frac k2+1\right).
\label{eq7}
\end{equation}
It follows that the series in (\ref{eq6}) converges in $H$ for any fixed $a$. Therefore,
$$F(a)=\sum_{k=0}^\infty\frac{\alpha^k}{k!}\langle 1,z^k\rangle\,a^k.$$
From this, the estimate in (\ref{eq7}), and Stirling's formula, we see that $F$ is 
entire.

On the other hand, it follows from condition (b) that $T_a$ is a unitary operator on $H$.
Since $T_aT_{-a}=I$ by direct calculuations, we have $T_a^*=T_a^{-1}=T_{-a}$ on $H$, just
as in the case of $F^2_\alpha$. Furthermore,
\begin{eqnarray*}
e^{-\frac\alpha2|a|^2}F(a)&=&\langle 1,k_a\rangle=\langle 1,T_a1\rangle
=\langle T_{-a}1,1\rangle=\langle k_{-a},1\rangle\\
&=&e^{-\frac\alpha2|a|^2}\langle K_{-a},1\rangle=e^{-\frac\alpha2|a|^2}\,
\overline{\langle 1,K_{-a}\rangle}\\
&=&e^{-\frac\alpha2|a|^2}\,\overline{F(-a)}.
\end{eqnarray*}
This shows that $F(z)=\overline{F(-z)}$ for all $z\in\cn$ and hence $F$ must be constant.
Let $c=\langle 1,1\rangle=F(0)>0$. Then $F(z)=c$ for all $z\in\cn$.

Let $a$ and $b$ be any two points in $\cn$. It follows from the observation
$T^*_a=T_{-a}$ and Lemma~\ref{1} that
\begin{eqnarray*}
\langle k_a,k_b\rangle&=&\langle T_a1,T_b1\rangle=\langle 1,T_{-a}T_b1\rangle
=e^{-\alpha i\im(a\overline b)}\langle 1,T_{-a+b}1\rangle\\
&=&e^{\alpha i\im(\overline ab)}e^{-\frac\alpha2|-a+b|^2}F(-a+b)\\
&=&ce^{-\frac\alpha2|a|^2-\frac\alpha2|b|^2+\alpha\overline ab}\\
&=&c\langle k_a,k_b\rangle_\alpha.
\end{eqnarray*}
This along with the atomic decomposition for $F^2_\alpha$ gives
$\langle f,g\rangle=c\langle f,g\rangle_\alpha$ for all $f$ and $g$ in $F^2_\alpha$.
This shows that $F^2_\alpha\subset H$ and the inner products $\langle f,g\rangle_\alpha$
and $\langle f,g\rangle_H$ only differ by a positive scalar.

We now introduce another approach which will actually give the equality $F^2_\alpha=H$.

It follows from conditions (b) and (c) that every point evaluation is a bounded linear
functional on $H$. Furthermore, for every compact set $A\subset\cn$ there exists a positive
constant $C>0$ such that $|f(z)|\le C\|f\|_H$ for all $f\in H$ and all $z\in A$. This
implies that $H$ possesses a reproducing kernel $K_H(z,w)$.

It is well known that for any orthonormal basis $\{e_k\}$ of $H$ we have
$$K_H(z,w)=\sum_{k=1}^\infty e_k(z)\,\overline{e_k(w)}$$
for all $z$ and $w$ in $\cn$. See \cite{A,S,Z2} for basic information about reproducing
Hilbert spaces of analytic functions.

Let $\{e_k\}$ be an orthonormal basis for $H$. Then for any fixed $a\in\cn$, the functions
$$\sigma_k(z)=T_ae_k(z)=e^{\alpha z\overline a-\frac\alpha2|a|^2}e_k(z-a)$$
also form an orthonormal basis for $H$. Therefore,
\begin{eqnarray*}
K_H(z,w)&=&\sum_{k=1}^\infty\sigma_k(z)\,\overline{\sigma_k(w)}\\
&=&k_a(z)\,\overline{k_a(w)}\sum_{k=1}^\infty e_n(z-a)\,\overline{e_k(w-a)}\\
&=&k_a(z)\,\overline{k_a(w)}K_H(z-a,w-a).
\end{eqnarray*}
Let $z=w=a$. We obtain
$$K_H(z,z)=e^{\alpha|z|^2}K_H(0,0),\qquad z\in\cn.$$
By a well-known result in the function theory of several complex variables, any 
reproducing kernel is uniquely determined by its values on the diagonal. See \cite{A,K,S}.
Therefore, if we write $K(z,w)=e^{\alpha z\overline w}$ for the 
reproducing kernel of $F^2_\alpha$, we must have $K_H(z,w)=cK(z,w)$ for all $z$ and $w$, 
where $c=K_H(0,0)>0$ as $H$ contains the constant function $1$. This shows that, after 
an adjustment of the inner product by a positive scalar, the two spaces $H$ and 
$F^2_\alpha$ have the same reproducing kernel, from which it follows that $H=F^2_\alpha$. 
This completes the proof of the theorem.
\end{proof}

The Fock space $F^2_\alpha$ is obviously invariant under the action of the full Heisenberg
group. Consequently, $F^2_\alpha$ is also the unique Hilbert space of entire functions in
$\cn$ that is invariant under the action of the Heisenberg group, or more precisely, under
the action of the previously mentioned unitary representation of the Heisenberg group.

It is not really necessary for us to assume that $H$ contains all constant
functions in Theorem~\ref{6}. According to the reproducing kernel approach, to ensure that
$K_H(0,0)>0$, all we need to assume is $H\not=(0)$. In fact, if $f$ is a function in $H$
that is not identically zero, then $f(a)\not=0$ for some $a$. Combining this with translation
invariance, we see that $f(0)\not=0$ for some $f\in H$, so $K_H(0,0)>0$.

\section{Another look at Bergman and Hardy spaces}

The reproducing kernel approach used in the proof of Theorem~\ref{6} also works for several 
other situations. We illustrate this using Bergman and Hardy spaces on the unit disk. 
The generalization to bounded symmetric domains is obvious.

In this section we consider the following weighted area measures on the unit disk $\D$,
$$dA_\alpha(z)=(\alpha+1)(1-|z|^2)^\alpha\,dA(z),$$
where $\alpha>-1$ and $dA$ is area measure normalized so that the unit disk has area $1$.
The Bergman space $A^2_\alpha$ is the closed subspace of $L^2(\D,dA_\alpha)$ consisting of
analytic functions. Thus $A^2_\alpha$ is a separable Hilbert space with the inherited
inner product $\langle f,g\rangle_\alpha$ from $L^2(\D,dA_\alpha)$. It is clear that every
point evaluation at $z\in\D$ is a bounded linear functional on $A^2_\alpha$. The reproducing 
kernel of $A^2_\alpha$ is given by the function
$$K_\alpha(z,w)=\frac1{(1-z\overline w)^{2+\alpha}}.$$

For any point $a\in\D$ define an operator $U_a$ by
$$U_af(z)=k_a(z)f(\varphi_a(z)),\qquad z\in\D,$$
where $\varphi_a(z)=(a-z)/(1-\overline az)$ is an involutive M\"obius map and
$$k_a(z)=\frac{K_\alpha(z,a)}{\sqrt{K_\alpha(a,a)}}=\frac{(1-|a|^2)^{(\alpha+2)/2}}{(1-
z\overline a)^{\alpha+2}}$$
is the normalized reproducing kernel of $A^2_\alpha$ at the point $a$. 

\begin{thm}
For each $a\in\D$ the operator $U_a$ is unitary on $A^2_\alpha$ with $U_a^*=U_a^{-1}=U_a$.
Furthermore, if $H$ is any separable Hilbert space of analytic functions in $\D$ satisfying
\begin{enumerate}
\item[(i)] $\|U_af\|_H=\|f\|_H$ for all $f$ in $H$ and $a\in\D$.
\item[(ii)] $f\mapsto f(0)$ is a nonzero bounded linear functional on $H$.
\end{enumerate}
Then $H=A^2_\alpha$ and there exists a positive constant $c$ such that $\langle f,g\rangle_H
=c\langle f,g\rangle_\alpha$ for all $f$ and $g$ in $H$.
\label{7}
\end{thm}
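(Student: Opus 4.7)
The plan is to follow the reproducing kernel approach used in the second half of the proof of Theorem~\ref{6}, adapted to the disk and the involutions $U_a$. The statement splits naturally into two parts: first, that each $U_a$ is a unitary involution on $A^2_\alpha$, and second, the uniqueness of the space.

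For the first part, I would verify that $U_a$ is an isometry of $A^2_\alpha$ by the standard change-of-variables argument: using $\varphi_a\circ\varphi_a=\mathrm{id}$ and the identity $1-|\varphi_a(z)|^2=(1-|a|^2)(1-|z|^2)/|1-\overline a z|^2$, the weight $(1-|z|^2)^\alpha$ combines with the Jacobian factor $|\varphi_a'(z)|^2=(1-|a|^2)^2/|1-\overline a z|^4$ and with $|k_a(z)|^2$ to leave $dA_\alpha$ invariant. Then $U_a^2=I$ is a direct computation, using $k_a(z)k_a(\varphi_a(z))=1$, so $U_a^{-1}=U_a=U_a^*$.

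For the uniqueness part, fix a separable Hilbert space $H$ of analytic functions on $\D$ satisfying (i) and (ii). First I would promote (ii) to boundedness of every point evaluation: for any $a\in\D$, the identity $U_af(0)=k_a(0)f(a)=(1-|a|^2)^{(\alpha+2)/2}f(a)$ combined with (i) and (ii) yields $|f(a)|\le C_a\|f\|_H$, so $H$ has a reproducing kernel $K_H(z,w)$. Since $U_a$ is unitary on $H$ and $\{e_k\}$ is any orthonormal basis, so is $\{U_ae_k\}$, and expanding the kernel in both bases gives the transformation rule
\begin{equation*}
K_H(z,w)=k_a(z)\,\overline{k_a(w)}\,K_H(\varphi_a(z),\varphi_a(w)),\qquad z,w,a\in\D.
\end{equation*}
Setting $w=z$ and then $a=z$, so that $\varphi_a(z)=0$, reduces this to
\begin{equation*}
K_H(z,z)=|k_z(z)|^2 K_H(0,0)=\frac{K_H(0,0)}{(1-|z|^2)^{\alpha+2}}=K_H(0,0)\,K_\alpha(z,z).
\end{equation*}
The nonzero-functional hypothesis in (ii) gives some $f\in H$ with $f(0)\ne 0$; since $f(0)=\langle f,K_H(\cdot,0)\rangle_H$, we have $K_H(\cdot,0)\ne 0$ and hence $K_H(0,0)>0$. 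Setting $c=K_H(0,0)$ and invoking the fact that a sesquiholomorphic reproducing kernel is determined by its diagonal values (the same fact cited at the end of the proof of Theorem~\ref{6}), I conclude $K_H(z,w)=c\,K_\alpha(z,w)$ everywhere. Two Hilbert spaces of analytic functions with proportional reproducing kernels coincide as vector spaces with inner products related by the corresponding positive scalar, so $H=A^2_\alpha$ and $\langle f,g\rangle_H=c\langle f,g\rangle_\alpha$.

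The only real obstacle is the first step — confirming that $U_a$ is genuinely unitary and involutive on $A^2_\alpha$; after that, everything is a mechanical translation of the second argument of Theorem~\ref{6}, with $\varphi_a$ playing the role that $z\mapsto z-a$ played in the Fock setting. I would not expect any subtleties at the diagonal-determination step, because the reproducing kernel $K_H(z,w)$ is holomorphic in $z$ and antiholomorphic in $w$ and both $K_H$ and $cK_\alpha$ agree on the antidiagonal $w=z$, which by polarization pins down the full kernel.
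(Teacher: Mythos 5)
Your proposal is correct and follows essentially the same reproducing-kernel argument as the paper's proof: boundedness of all point evaluations from (i) and (ii), the transformation rule for $K_H$ via an orthonormal basis and its image under $U_a$, evaluation at $z=w=a$ to get $K_H(z,z)=cK_\alpha(z,z)$, and the diagonal-determination principle to conclude $K_H=cK_\alpha$ and hence $H=A^2_\alpha$. You supply slightly more detail than the paper on the verification that $U_a$ is a unitary involution on $A^2_\alpha$, which the paper dismisses as an easy check.
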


\begin{proof}
This result was proved in \cite{AF3}. We now show that it follows from the reproducing
kernel approach introduced earlier, without appealing to averaging operations on any
subgroup of the M\"obius group. In fact, we are not even assuming that $H$ is invariant
under the action of rotations.

First observe that condition (i) together with the identity $U_a^2=I$, which can be checked
easily, shows that each $U_a$ is a unitary operator on $H$. Next observe that conditions (i)
and (ii) imply that for every $z\in\D$, the point evaluation $f\mapsto f(z)$ is a bounded
linear functional on $H$, and if $z$ is restricted to any compact subset of $\D$, then the
norm of these bounded linear functionals is uniformly bounded. Therefore, $H$ possesses a
reproducing kernel $K_H(z,w)$. Since $f\mapsto f(0)$ is nontrivial, we have $c=K_H(0,0)>0$.

Let $\{e_j\}$ be any orthonormal basis for $H$. Then the functions
$$\sigma_j(z)=U_ae_j(z)=k_a(z)e_j(\varphi_a(z))$$
form an orthonormal basis for $H$ as well. Thus
\begin{eqnarray*}
K_H(z,w)&=&\sum_{j=1}^\infty\sigma_j(z)\,\overline{\sigma_j(w)}\\
&=&k_a(z)\overline{k_a(w)}\sum_{j=1}^\infty e_j(\varphi_a(z))\,\overline{e_j(\varphi_a(w))}\\
&=&k_a(z)\overline{k_a(w)}K_H(\varphi_a(z),\varphi_a(w)).
\end{eqnarray*}
Let $z=w=a$. Then
$$K_H(z,z)=|k_z(z)|^2K_H(0,0)=cK_\alpha(z,z),\qquad z\in\D.$$
Since any reproducing kernel is determined by its values on the diagonal, we have
$K_H(z,w)=cK_\alpha(z,w)$ for all $z$ and $w$ in $\D$. This shows that $H=A^2_\alpha$ and
$\langle f,g\rangle_H=c\langle f,g\rangle_\alpha$ for all $f$ and $g$ in $H$.
\end{proof}

Recall that the Hardy space $H^2$ consists of analytic functions in the unit disk $\D$ such that
$$\|f\|^2=\sum_{k=0}^\infty|a_k|^2<\infty,\quad f(z)=\sum_{k=0}^\infty a_kz^k.$$
It is well known that every function in $H^2$ has nontangential limits at almost every point
of the unit circle $\T$. Furthermore, when $f$ is identified with its boundary function, we
can think of $H^2$ as a closed subspace of $L^2(\T,d\sigma)$, where
$$d\sigma(\zeta)=\frac1{2\pi}\,d\theta,\qquad \zeta=e^{i\theta}.$$
Thus $H^2$ is a Hilbert space with the inner product inherited from $L^2(\T,d\sigma)$.

\begin{thm}
For any $a\in\D$ the operator $U_a$ defined by
$$U_af(z)=\frac{\sqrt{1-|a|^2}}{1-z\overline a}\,f(\varphi_a(z))$$
is unitary on $H^2$ with $U_a^*=U_a^{-1}=U_a$. Furthermore, if $H$ is any separable 
Hilbert space of analytic functions in $\D$ satisfying
\begin{enumerate}
\item[(i)] $\|U_af\|_H=\|f\|_H$ for all $f$ in $H$ and $a\in\D$.
\item[(ii)] $f\mapsto f(0)$ is a nontrivial bounded linear functional on $H$.
\end{enumerate}
Then $H=H^2$ and there exists a positive constant $c$ such that $\langle f,g\rangle_H=
c\langle f,g\rangle_{H^2}$ for all $f$ and $g$ in $H$.
\label{8}
\end{thm}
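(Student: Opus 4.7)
The plan is to prove this by adapting the reproducing kernel argument from Theorem~\ref{7} essentially verbatim, once we have established that $U_a$ is a unitary involution on $H^2$. The first step is therefore to verify the two claims about $U_a$ on $H^2$ itself. The identity $U_a^2 = I$ follows from the algebraic identity
\[
1 - \varphi_a(z)\overline a = \frac{1-|a|^2}{1-z\overline a},
\]
which makes the two Cauchy weight factors telescope while $\varphi_a\circ\varphi_a = \mathrm{id}$ takes care of the argument of $f$. Isometry on $H^2$ is best proved via boundary values: the change of variables $\eta = \varphi_a(\zeta)$ on $\T$, together with $|\varphi_a'(\zeta)| = (1-|a|^2)/|1-\overline a\zeta|^2$, exactly cancels the squared weight and gives $\|U_a f\|_{H^2} = \|f\|_{H^2}$. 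Combined with $U_a^2 = I$, this yields $U_a^* = U_a^{-1} = U_a$.

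For the uniqueness assertion, I would first show that $H$ carries a reproducing kernel. If $|f(0)| \le M\|f\|_H$ is the bound coming from (ii), applying it to $U_a f$ and using $(U_a f)(0) = \sqrt{1-|a|^2}\,f(a)$ together with (i), I get
\[
|f(a)| \le \frac{M}{\sqrt{1-|a|^2}}\,\|f\|_H,
\]
so every point evaluation is bounded and these norms are uniformly controlled on compacta of $\D$. Hence $H$ possesses a reproducing kernel $K_H(z,w)$, and $c := K_H(0,0) > 0$ because evaluation at $0$ is nontrivial.

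From here the argument is formally identical to the Bergman case. For any orthonormal basis $\{e_j\}$ of $H$, the functions $\sigma_j = U_a e_j$ again form an orthonormal basis, so
\[
K_H(z,w) = \frac{\sqrt{1-|a|^2}}{1-z\overline a}\cdot\frac{\sqrt{1-|a|^2}}{\overline{1-w\overline a}}\,K_H(\varphi_a(z),\varphi_a(w)).
\]
Setting $z = w = a$ and using $\varphi_a(a) = 0$ gives $K_H(a,a) = c/(1-|a|^2)$, which is exactly $c$ times the Hardy space reproducing kernel $K_{H^2}(a,a) = 1/(1-|a|^2)$ on the diagonal. Since a reproducing kernel is determined by its diagonal values, $K_H \equiv c K_{H^2}$, and therefore $H = H^2$ with inner products differing by the positive scalar $c$. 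The only real obstacle is the unitarity check for $U_a$ on $H^2$; the reproducing kernel step is just a rerun of the computation in the proof of Theorem~\ref{7}, with the Szego kernel $1/(1-z\overline w)$ replacing $K_\alpha$.
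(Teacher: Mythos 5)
Your proposal is correct and follows exactly the route the paper intends: the paper's proof of this theorem is simply the remark that it is proved in the same way as Theorem~\ref{7}, and your argument is precisely that reproducing kernel computation with the Szeg\H{o} kernel $1/(1-z\overline w)$ in place of $K_\alpha$, supplemented by the (correct) verification that $U_a$ is a unitary involution on $H^2$ via the telescoping identity $1-\varphi_a(z)\overline a=(1-|a|^2)/(1-z\overline a)$ and the boundary change of variables. All the computations check out, including the pointwise bound $|f(a)|\le M\|f\|_H/\sqrt{1-|a|^2}$ and the diagonal identity $K_H(a,a)=c/(1-|a|^2)$.
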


\begin{proof}
This is proved in exactly the same way as Theorem~\ref{7} was proved.
\end{proof}

Finally we mention that the reproducing kernel approach here does not seem to work in cases 
like the Dirichlet space when the following M\"obius invariant semi-inner product is used:
$$\langle f,g\rangle=\ind f'(z)\overline{g'(z)}\,dA(z).$$
In this case, we have $\langle 1,1\rangle=\|1\|^2=0$. If we somehow make $\|1\|>0$, then
the resulting inner product will no longer be M\"obius invariant.

\end{document}